\documentclass[a4paper,12pt,reqno]{amsart}
\usepackage{paralist} 
\usepackage[T1]{fontenc}
\usepackage{geometry}
\geometry{left=32mm,right=32mm, top=3cm, bottom=3cm}  
\allowdisplaybreaks

\usepackage{amsmath}
\allowdisplaybreaks
\usepackage{amssymb}
\usepackage{amsfonts}
\usepackage{color}
\usepackage{enumitem}

\usepackage{booktabs}

\usepackage{etoolbox}
\patchcmd{\thebibliography}{\section*{\refname}}{}{}{}

\usepackage{verbatim}

\usepackage{graphicx}
\usepackage{subfig}
\usepackage{bm}
\usepackage{tikz}
\usepackage{cite}
\bgroup

\numberwithin{equation}{section}

\setlength{\parskip}{0.3\baselineskip}

\usepackage[bookmarks,bookmarksopen=false,
                  pdfauthor={Autor},%
                  pdftitle={Titel},%
                  colorlinks=true,%
                  linkcolor=blue,%
                  citecolor=blue,%
                  urlcolor=blue,%
                ]{hyperref}

\newtheorem{lemma}{Lemma}[section]
\newtheorem{theorem}[lemma]{Theorem}
\newtheorem{corollary}[lemma]{Corollary}
\newtheorem{proposition}[lemma]{Proposition}

\newtheorem{definition}[lemma]{Definition}

\newtheorem{remark}[lemma]{Remark}

\newcommand{\R}{\mathbb{R}}

\newcommand{\Z}{\mathbb{Z}}

\newcommand{\A}{\mathcal{A}}

\renewcommand{\H}{\mathcal{H}}

\definecolor{darkviolet}{rgb}{0.58,0,0.83} 

\usetikzlibrary{patterns}

\title{Localised frames for tensor product spaces}
\author[D. Bytchenkoff]{Dimitri Bytchenkoff}
\address{(D. B.) Acoustics Research Institute, Austrian Academy of Sciences, Dominika- nerbastei 16,
1010 Vienna, Austria;
\newline
\indent Faculty of Mathematics, University of Vienna, 
Oskar-Morgenstern-Platz 1, 1090 Vienna, Austria;
\newline 
\indent Université de Lorraine, Laboratoire d’Energétique et de Mécanique Théorique et Appliquée, 2 avenue de la Forêt de
Haye, 54505 Vandoeuvre-lès-Nancy, France}
\email{\hspace{-2pt}dimitri.bytchenkoff@oeaw.ac.at; \hspace{-4pt}dimitri.bytchenkoff@univie.ac.at; dimitri.bytchenkoff@univ-lorraine.fr}
\author[M. Speckbacher]{Michael Speckbacher}
\address{(M. S.) Acoustics Research Institute, Austrian Academy of Sciences, Dominika- nerbastei 16,
1010  Vienna, Austria}
\email{michael.speckbacher@oeaw.ac.at}
\author[P. Balazs]{Peter Balazs}
\address{(P. B.) Acoustics Research Institute, Austrian Academy of Sciences, Dominika- nerbastei 16,
1010  Vienna, Austria;
\newline
\indent Acoustics, Analysis and AI (3AI), Interdisciplinary Transformation University Austria (IT:U), Linz, Austria}
\email{peter.balazs@oeaw.ac.at}
 
\date{}

\begin{document}

\begin{abstract} 

In this paper, we investigate whether the tensor product of two frames, each individually localised with respect to a spectral matrix algebra, is also localised with respect to a suitably chosen tensor product algebra. 
We provide a partial answer by constructing an involutive Banach algebra of rank-four tensors that is built from   two  solid spectral matrix algebras. We show that this algebra is inverse-closed, given that the original algebras satisfy a specific property related to operator-valued versions of these algebras. This condition is satisfied by all commonly used solid spectral matrix algebras.
We  then prove that the tensor product of two self-localised frames remains self-localised with respect to our newly constructed tensor algebra. Additionally, we discuss generalisations to localised frames of Hilbert-Schmidt operators, which may not necessarily consist of rank-one operators.
\end{abstract}

\maketitle

\noindent\textit{2020 Mathematics subject classification.} {46B28, 42B35, 42C15}\\
\noindent \textit{Keywords.} {localised frames, tensor products, spectral matrix algebras, inverse-closedness}

\section{Introduction}

\noindent The concept of localisation of frames 
was introduced by 
Gröchenig \cite{Groechenig_2004} to measure, in a broad sense, the quality of a frame. A frame $\Psi=\{\psi_i\}_{i\in I}$ is said to be self-localised   \cite{Balazs_2017,Fornasier_2005} if its Gram matrix $G_{\Psi}  := \left( \langle \psi_{i'} ,  \psi_i \rangle_H \right)_{(i', \, i) \in I^2}$ belongs to an inverse-closed matrix algebra. This definition  implies, among other things,  that the canonical dual of a self-localised frame is also self-localised and allows   to introduce a whole range of Banach spaces, the so-called co-orbit spaces,  which are defined by    decay conditions on the frame coefficients of its elements.
It is common knowledge that the set of elementary tensor products of the elements of two frames 
constitutes a frame \cite{Balazs_2008_bis} -- often called the tensor product frame -- for the tensor product of the Hilbert spaces. 
The question that has remained open  is whether the tensor product of two self-localised frames is, in some way, also self-localised, or, in other words, whether there is a natural way  to define an inverse-closed Banach algebra of   tensors of rank four that contains the Gram tensor (Definition~\ref{Gram_4}) of a tensor product frame. Such a construction would allow for a  direct    application of the tools of  co-orbit theory to tensor product frames \cite{Bytchenkoff_2024}.

In this paper, we provide an affirmative answer to this question, under what we believe to be quite reasonable additional conditions. To achieve this, we begin with two solid spectral matrix algebras and construct an algebra of rank-four tensors. We then equip this algebra with a norm derived from the norms of operator-valued variants of the original matrix algebras. 
We demonstrate that the resulting algebra of tensors of rank four is an inverse-closed, albeit non-solid, involutive Banach algebra provided that the operator-valued versions of the matrix algebras used in its construction are inverse-closed. Here we note that solidity of the algebra is not necessary for the intrinsically localised frames to have most of their desirable properties (see Section~\ref{sec:solidity} for a discussion of that matter). It has recently been shown that the most prominent classes of inverse-closed matrix algebras indeed meet  this condition  \cite{Koehldorfer_2024}, which makes our results widely applicable. Finally, we show that the Gram tensor of the tensor product of two self-localised frames belongs to the  algebra  that we constructed.

Our approach enabled us to overcome the obstacles that have prevented previous attempts to apply the methods developed for proving the inverse-closedness of the Jaffard class \cite{Jaffard_1990}, Schur-type algebras \cite{Schur,Sun_2005} and the Sj\"ostrand algebra   \cite{Sjöstrand} to tensor product algebras.
More specifically, those techniques  usually cannot be applied to  tensor products of algebras from different classes or to algebras using anistropic weights. To illustrate this, let us, e.g., consider two  frames $\Psi_1=\{\psi_{1,   k}\}_{k\in\Z}$ and $\Psi_2=\{\psi_{2,   k}\}_{k\in\Z}$ whose Gram matrices belong to the Jaffard class for two distinct weights, i.e., 
\[
\sup_{(k,  l)\in \Z^2}
\big|\langle \psi_{1,   k},\psi_{1,  l}\rangle \big| \nu_{s_1}(k-l)
<\infty,
\quad \text{ and }
\quad
\sup_{(m,  n)\in \Z^2}
\big|\langle \psi_{2,   m},\psi_{2,   n}\rangle \big| \nu_{s_2}(m-n)
<\infty,
\] where $\nu_s(z)=(1+|z|)^{s}$, and $s_1,s_2$ are both greater than one and differ from one another. The Gram tensor of rank four of the tensor product frame $\Psi_1 \otimes \Psi_2$ will then belong to the so-called anisotropic   Jaffard  class, i.e.,
\[
\sup_{(k, l,  m,   n)\in \Z^4}
\big|\langle \psi_{1,  k} \otimes \psi_{2,   m}, \psi_{1,  l} \otimes \psi_{2,   n}\rangle \big| \nu_{s_1}(k-l)\nu_{s_2}
 (m-n)
<\infty.
\] It has remained unclear whether this anisotropic   Jaffard  class is inverse-closed or not. 
One could try to circumvent this obstacle and consider the  higher dimensional Jaffard class with   isotropic weight $\nu_{s}\big((k,n)-(l,m)\big)$. This class is inverse-closed given that   $s>2$, which assumes a degree of  localisation that the frames $\Psi_1$ and $\Psi_2$ might not satisfy though.
In contrast to this example, our approach allows us to deal with tensor products of  two  algebras of the same class with distinct weights and,  of two algebras from distinct classes.

 In order to make this article self-contained, we   recall the definitions and   established facts we drew on in this paper  in Section~\ref{sec:background} before stating and proving our new contributions in Section~\ref{sec:tensor-of-frames}.

\section{Background information and notation}\label{sec:background}


\subsection{Self-localised frames for Hilbert spaces and their associated co-orbit spaces}

\noindent 
The concept of a \emph{frame} generalises that of a basis of a vector space. One of the major advantages of frames over bases is
that a frame can often be designed in a way that allows for a sparser decompositions of the elements of a given vector space than a basis would do.   

\begin{definition} 
A countable set $\Psi : = \{ \psi_i \}_{i \in I}$ of elements of a separable Hilbert space $H$ is called a \emph{frame}  if there exist positive numbers $A_{\Psi}$ and $B_{\Psi}$  such that
\begin{equation}
A_{\Psi} \, \| f \|_H^2 \leqslant
  \sum_{i \in I} \vert \langle f  ,  \psi_i \rangle_H \vert^2 \leqslant
  B_{\Psi} \, \| f \|_H^2
\label{eq:Rahmenungleichung_Hilbert}
\end{equation} for any element $f\in H$.
\label{eq:Rahmen_im_Hilbertraum}
\end{definition}

\noindent The inequalities in (\ref{eq:Rahmenungleichung_Hilbert}), together with the nature of Hilbert spaces, have the following implications. Firstly,  the second inequality in (\ref{eq:Rahmenungleichung_Hilbert}) implies that  the \emph{analysis operator} $C_\Psi$
\begin{equation*}
C_{\Psi} : H \rightarrow  \ell^2(I) ,\quad f \mapsto \{\langle f   ,   \psi_i \rangle_H\}_{i \in I},
\label{eq:Analysenoperator}
\end{equation*} 
 is bounded. Secondly,   the \emph{synthesis operator} $D_\Psi$
\begin{equation*}
D_\Psi : \ell^2(I) \rightarrow H ,\quad \{c_i\}_{i \in I} \mapsto  \sum_{i \in I} c_i \psi_i,
\label{eq:Synthesenoperator}
\end{equation*}
is bounded too. The first inequality  in (\ref{eq:Rahmenungleichung_Hilbert}) implies that there exists at least one other frame $ \Psi^\mathbf{d}  : = \{ \psi_i^\text{d} \}_{i \in I}$ for  $H$, called a \emph{dual frame}, that satisfies
\begin{equation}
f  
   = \sum_{i \in I} \langle f ,   \psi_i \rangle_H
 \,   \psi_i^\text{d}
   = D_{ {\Psi}^\mathbf{d} } C_{\Psi} f 
   = \sum_{i \in I}  \langle f  ,  \psi_i^\text{d}  \rangle_H \, \psi_i 
   = D_{\Psi} C_{ {\Psi^\mathbf{d} }} f,\quad f\in H.
\label{eq:Rahmen_im_Hilbert_Wiederaufbau}
\end{equation}  
Fourthly,  the \emph{frame operator} $S_\Psi$
\begin{equation*}
S_\Psi : H \rightarrow H ,\quad f \mapsto  \sum_{i \in I} \langle f   ,   \psi_i \rangle_H\, \psi_i
\label{eq:Rahmenoperator}
\end{equation*} is bounded, self-adjoint and invertible  for any frame $\Psi  $ \cite{Balazs_2011} and the sequence $\widetilde{\Psi}:=\{ S_{\Psi}^{-1} \psi_i \}_{i \in I}$, known as the \emph{canonical dual frame} \cite{Christensen_2008}, is a dual frame for $\Psi$. Finally, the map $G_{\Psi}$, called the \emph{Gram operator},
\begin{equation*}
\label{eq:Gramscher_Operator}
G_\Psi : \ell^2(I) \rightarrow \ell^2(I) ,\quad \{c_i\}_{i \in I} \mapsto  \sum_{i \in I} \langle \psi_{i'} , \psi_i \rangle_H\, c_i
= \sum_{i \in I} (G_\Psi)_{i', \, i} \, c_i
= C_{\Psi} D_{\Psi} c ,
\end{equation*} where
\begin{equation*}
G_{\Psi} := ((G_{\Psi})_{i', \, i})_{(i', \, i) \in I^2} := \left( \langle \psi_{i'} ,  \psi_i \rangle_H \right)_{(i', \, i) \in I^2}
\label{eq:Gramsche_Matrix}
\end{equation*} is the so-called \emph{Gram matrix}, 
 is a bounded operator.

There is also the notion of a frame for Banach spaces \cite{Groechenig_1991}.
\begin{definition}
    Let $X$ be a separable Banach space and $X_d$ a Banach sequence space. A countable set $\Psi : = \{ \psi_i \}_{i \in I}$ of elements of the topological dual $X^*$ of the space $X$ is called an $X_d$-\emph{frame} for the space $X$ if there are positive numbers $A_{\Psi}$ and $B_{\Psi}$ such that
\begin{equation*}
A_{\Psi} \, \| f \|_X \leqslant
  \big\| \{\langle f  ,  \psi_i \rangle_{X, \, X^*}\}_{i\in I} \big\|_{X_d} \leqslant
  B_{\Psi} \, \| f \|_X
\label{eq:Rahmenungleichung_Banach}
\end{equation*} for any element $f\in X$.
\label{eq:Rahmen_im_Banachraum}
\end{definition}

\noindent Despite an apparent similarity of Definitions \ref{eq:Rahmen_im_Hilbertraum} and \ref{eq:Rahmen_im_Banachraum}, there is a very important difference between   frames for Hilbert spaces and those for Banach spaces:
 an $X_d$-frame for a Banach space in itself does not necessarily allow to express   any element of the space in a way similar to that described by (\ref{eq:Rahmen_im_Hilbert_Wiederaufbau}). Therefore, additional assumptions to ensure such a reconstruction  are needed.
 For some Banach function spaces this problem can be solved by using so-called \emph{structured frames} that satisfy only a few specific and easily verifiable criteria \cite{Borup_2007, Nielsen_2012, Nielsen_2014, Voigtlaender_2022, Bytchenkoff_2020, Bytchenkoff_2021, Bytchenkoff_2025}.  Another class of Banach spaces for which this problem can be solved are the so-called  co-orbit spaces \cite{Feichtinger_1988, Feichtinger_1989, Feichtinger_1989_2}, which are the Banach spaces of choice in numerous areas of harmonic analysis. Classically, co-orbit spaces were studied for families generated by integrable group representations. Here, we shall follow the approach introduced by Fornasier and Gr\"ochenig \cite{Fornasier_2005} and consider co-orbit spaces that are generated by \emph{self-localised frames}. In essence, self-localisation of a frame $\Psi$ amounts to the Gram matrix $G_\Psi$ belonging to   a spectral matrix algebra.

\begin{definition}\label{def:spectral-algebra}
An involutive Banach algebra $\mathcal{A}$ of infinite matrices with norm $\|\cdot\|_{\mathcal{A}}$ is called a \emph{spectral matrix algebra} if
\begin{enumerate}
\item [(i)]
every $A \in \mathcal{A}$ defines a bounded operator on $\ell^2 (I)$, i.e., $\mathcal{A}\subset \mathcal{B}(\ell^2(I))$; 
\item[(ii)] $\mathcal{A}$ is inverse-closed, that is if $A\in \A$ is invertible in $\mathcal{B}(\ell^2(I))$ then $A^{-1}\in\A$ as well;
\end{enumerate}
if, moreover, 
\begin{enumerate}
\item[(iii)] $\mathcal{A}$ is $solid$, i.e., $A \in \mathcal{A}$, together with $\vert  B_{i,j} \vert \leqslant \vert A_{i,j} \vert$ for any $(i, \, j) \in I^2$, implies that $B := \left( B_{i, \, j} \right)_{(i, \, j) \in I^2} \in \mathcal{A}$ and that $\left\| B \right\|_{\mathcal{A}} \leqslant \left\| A \right\|_{\mathcal{A}}$, 
\end{enumerate}
then $\A$ is called a \emph{solid spectral matrix algebra}.
\label{eq:Spektrale-Matrix-Algebra}
\end{definition}
\noindent Examples of solid spectral matrix algebras include, among others, the Jaffard class \cite{Jaffard_1990}, Schur-type algebras \cite{Schur} and the Sj\"ostrand algebra \cite{Sjöstrand}.
\begin{definition}
Let $I  $ be a  countable index set. The set $\Psi = \{ \psi_i \}_{i \in I}$ of elements of $H$ is said to be $\mathcal{A}$-\emph{self-localised}, or  simply \emph{self-localised}, if its Gram matrix $G_\Psi :=(\langle \psi_i , \psi_{i'} \rangle_H)_{(i, \, i') \in I^2}$ belongs to a spectral matrix algebra $\mathcal{A}$. 
\label{eq:Localisation_2_}
\end{definition}

\noindent An $\mathcal{A}$-self-localised frame $\Psi$ for a Hilbert space $H$ allows to generate a whole range of Banach spaces known as \emph{co-orbit spaces} \cite{Feichtinger_1988, Feichtinger_1989, Feichtinger_1989_2}, which we shall introduce after the following   auxiliary definition.

\begin{definition}
Let $\mathcal{A}\subset\mathcal{B}(\ell^2(I))$ be a spectral algebra of infinite matrices. A sequence of positive real numbers $w := \{ w_i \}_{i \in I}\subset \mathbb{R}_{>0}$ is called an $\mathcal{A}$-\emph{admissible} weight if every matrix $A \in \mathcal{A}$ defines a bounded operator on the sequence space $\ell_w^p(I)$  for any $p \in [1, \, \infty ]$.\label{eq:Zugelassenes_Gewicht_}
\end{definition}

\begin{definition}
Let $\mathcal{A}$ be a spectral matrix algebra,   $\Psi : = \{ \psi_i \}_{i \in I}$ an $\mathcal{A}$-localised frame for a Hilbert space $H$, $\widetilde\Psi : = \{ \widetilde\psi_i \}_{i \in I}$ its canonical dual frame, $w := \{ w_i \}_{i \in I}$  an $\mathcal{A}$-admissible weight and
\begin{equation}
H_{00}(\Psi)  
   = \left\lbrace  \sum_{i \in I} c_i \, \psi_i :  \   \{ c_i \}_{i \in I} \in c_{00}  \right\rbrace,
\label{eq:H_00}
\end{equation} 
where $c_{00}$ stands for the space of sequences of complex numbers with only finitely many non-zero terms. The co-orbit space $H_w^p (\Psi),\ 1\leq p<\infty$ is the completion of  $H_{00}(\Psi)$ with respect to the norm $\|f\|_{H^p_w(\Psi)}:=\left\| C_{\widetilde\Psi} f   \right\|_{\ell_w^p}$.  
\label{eq:Coorbit-Raum}
\end{definition}
\noindent A proper definition of the co-orbit space $H_w^\infty (\Psi)$ is technically more involved. As our analysis focuses primarily on the properties of the localisation algebras rather than the co-orbit spaces, we will omit the details here and refer the interested reader to  \cite{Balazs_2017,H-infty} for an in depth discussion.
One of the main advantage  of co-orbit space theory is that the frame for the Hilbert space that was used to generate the range of co-orbit spaces is also a frame for any of the co-orbit spaces   and it allows to decompose and re-synthesise  any of its elements  \cite{Fornasier_2005}.

\subsection{Tensor products of Hilbert spaces and Hilbert-Schmidt operators} 


Let $H_1$ and $H_2$ be two Hilbert spaces, $f_1\in H_1$ and $f_2\in H_2$. The \emph{elementary tensor} 
$f_1 \otimes f_2$ of $f_1$ and $f_2$ will be understood as the  rank-one operator  mapping $H_1$ to $H_2$ according to the expression
\begin{equation*}
(f_1  \otimes f_2) (f) := \langle f  ,  f_1 \rangle_{H_1}  f_2,\quad f\in H_1.
\label{eq:Tensorprodukt_als_Operator}
\end{equation*} 
It should be pointed out that the elementary tensors are homogeneous in the following sense 
\[
\alpha (f_1 \otimes f_2)=(\overline{\alpha}f_1)\otimes f_2=f_1\otimes (\alpha f_2)  .
\]
The tensor product $H_1\otimes H_2$ 
is defined as the completion of the linear span of all elementary tensors 
with respect to the metric induced by the scalar product 
\begin{equation*}
\langle f_1\otimes f_2 ,  g_1\otimes g_2\rangle_{H_1\otimes H_2} =\overline{\langle f_1  ,  g_1\rangle}_{H_1}\langle f_2  ,  g_2\rangle_{H_2}. 
\end{equation*}
\noindent We note that  the tensor product 
$H_1\otimes H_2$ can be identified with the Banach algebra of \emph{Hilbert-Schmidt operators} $HS(H_1, H_2)$, which is defined as the space of bounded linear operators mapping $H_1$ to $H_2$ equipped with the norm induced by the scalar product
\begin{equation*}
\langle O , O' \rangle_{HS(H_1, \, H_2)} := \sum_{i \in \mathbb{N}} \langle O e_i  ,   O' e_i \rangle_{H_2},
\label{eq:Hilbert_Schmidt-Skalarprodukt}
\end{equation*} 
where $\{ e_i \}_{i \in \mathbb{N}}$ stands for an orthonormal basis of  $H_1$.

\section{Tensor products of spectral matrix algebras}\label{sec:tensor-of-frames}


\noindent The set of elementary tensor products $\Psi_1\otimes \Psi_2 := \{\psi_{1,\, i}\otimes\psi_{2,\, j}\}_{(i, \, j)\in I_1 \times I_2}$ of the elements of two frames $\Psi_1:=\{\psi_{1,\, i}\}_{i\in I_1}\subset H_1$ and  $\Psi_2:=\{\psi_{2,\, j}\}_{j\in I_2}\subset H_2$ is known to constitute a frame for the tensor product $H_1 \otimes H_2$. Its canonical dual is given by  $\widetilde{\Psi_1\otimes\Psi_2} := \widetilde{\Psi_1}\otimes \widetilde{\Psi_2} = \big\{\widetilde \psi_{1,\, i} \otimes \widetilde \psi_{2,\, j}\big\}_{(i, \, j)\in I_1 \times I_2}$, see \cite{Balazs_2008_bis}. In what follows we shall refer to $\Psi_1\otimes \Psi_2$ as a  \textit{tensor product frame}. Let us now assume that $\Psi_1$ and $\Psi_2$ are self-localised according to Definition \ref{eq:Localisation_2_}, in other words, let us assume that $G_{\Psi_1} := \left( \langle \psi_{1, \, i'} , \psi_{1, \, i} \rangle \right)_{(i, \, i') \in I^2} \in \mathcal{A}_1$ and $G_{\Psi_2} := \left( \langle \psi_{2, \, j'} ,  \psi_{2, \, j} \rangle \right)_{(j, \, j') \in I^2} \in \mathcal{A}_2$ where $\mathcal{A}_1$ and  $\mathcal{A}_2$ are two spectral matrix algebras and show that $\Psi_1\otimes \Psi_2$ is, in a way, self-localised too. To do so, we shall define a Gram tensor of rank four $G_{\Psi_1 \otimes \Psi_2}$ 
and an   involutive Banach algebra of tensors of rank four and prove that $G_{\Psi_1 \otimes \Psi_2}$ belongs to this algebra. We first give our definition of the Gram tensor $G_{\Psi_1 \otimes \Psi_2}$, which is reminiscent of the ordinary Gram matrix.

\begin{definition}\label{Gram_4} Let $\{ \Omega_{i, \, k} \}_{(i, \, k) \in I_1 \times I_2}\subset H_1\otimes H_2$. The \emph{Gram tensor of rank four} $G_{\Omega}$ 
is defined by
\begin{equation}
    G_{\Omega} =
   \big(
     \langle \Omega_{i, \, k} , 
             \Omega_{j, \, l} \rangle_{H_1 \otimes H_2}
   \big)_{(i, \, k, \, l, \, j) \in I_1 \times I_2^2  \times I_1}
.
\end{equation}
\end{definition}
\noindent If $\{ \Omega_{i, \, k} \}_{(i, \, k) \in I_1 \times I_2}$  consists of only elementary tensor products of  two families $\Psi_1\subset H_1$ and $\Psi_2\subset H_2$, the Gram tensor reduces to the Kronecker product of the Gram matrices
\begin{equation}
 \begin{split}
  G_{\Psi_1 \otimes \Psi_2} &
  = \big( \langle \psi_{1, \, j} , \psi_{1, \, i} \rangle_{H_1} \, \langle \psi_{2, \, l} , \psi_{2, \, k} \rangle_{H_2} \big)_{(i, \, k, \, l, \, j) \in I_1 \times I_2^2  \times I_1}\\ &=(G_{\Psi_1})_{(i,j)\in I_1^2}\, (G_{\Psi_2})_{(k,l)\in I_2^2} .
 \end{split}
\label{eq:Gramsche_Hypermatrix}
\end{equation}
We use $\mathcal{T}$ to denote   the set of rank-four tensors whose elements are indexed by $I_1 \times I^2_2 \times I_1$   \cite{Fock_1961}. Clearly $\left( \mathcal{T}, \, \cdot \, , \, + \right)$ -- i.e., the set $\mathcal{T}$, together with   pointwise addition and scalar multiplication -- constitutes a vector space over the field $\mathbb{C}$. To introduce an involutive Banach algebra contained in  $\left( \mathcal{T}, \, \cdot \, , \, + \right)$, we need to introduce  a norm, a multiplication and an involution, for such tensors. 

\begin{definition}\label{Norm} Let $\mathcal{A}_1$ and $\mathcal{A}_2$ be involutive Banach algebras of tensors of rank two with norms $\| \cdot  \|_{\mathcal{A}_1}$ and $\|  \cdot  \|_{\mathcal{A}_2}$ whose elements are indexed by $I^2_1$ and $I^2_2$ respectively. For any tensor of rank four $A\in\mathcal{T}$ we define
\begin{equation} \label{eq:us_A1-tilde}
\begin{split}
\left\| A \right\|_{\mathcal{\widetilde A}_1} : =
\left\|
\left(
\left\|
A \big\vert_{\{i\} \times I_2^2 \times \{j\}}
\right\|_{\mathcal{B}(\ell^2(I_2))}
\right)_{(i, \, j) \in I_1^2}
\right\|_{\mathcal{A}_1},
\end{split}
\end{equation} where $A \big\vert_{\{i\} \times I_2^2 \times \{j\}}$ stands for the restriction of  $A$, viewed as a map $I_1 \times I_2^2 \times I_1 \to \mathbb{C}$, to the subset ${\{i\} \times I_2^2 \times \{j\}}$, and
\begin{equation} \label{eq:us_A2-tilde}
\begin{split}
\left\| A \right\|_{\mathcal{\widetilde A}_2} : =
\left\|
\left(
\left\|
A \big\vert_{I_1 \times \{(k, \, l)\} \times I_1}
\right\|_{\mathcal{B}(\ell^2(I_1))}
\right)_{(k, \, l) \in I_2^2}
\right\|_{\mathcal{A}_2},
\end{split}
\end{equation} where $A \big\vert_{I_1 \times \{(k, \, l)\} \times I_1}$ stands for the restriction of  $A$ to the subset $I_1 \times \{(k, \, l)\} \times I_1$. Furthermore, we set $$\widetilde{\A}_1:=\big\{A\in\mathcal{T}:\ \|A\|_{\widetilde{\A}_1}<\infty\big\},\quad \widetilde\A_2:=\big\{A\in\mathcal{T}:\ \|A\|_{\widetilde{\A}_2}<\infty\big\},$$ 
and $\A:=\A_1\cap\A_2$, where $\A$ is equipped with the norm 
\begin{equation} \label{eq:us_grosse_Norm}
\| A \|_{\mathcal{A}} :=
\max \big\{ \left\|
  A 
  \right\|_{\mathcal{\widetilde A}_1}
   ,
  \left\|
  A 
  \right\|_{\mathcal{\widetilde A}_2}
  \big\}  .
\end{equation}
\end{definition}

\noindent Our, perhaps somewhat unusual, way of indexing the elements of tensors of rank four is, we believe, fully justified by the fact that the \emph{double contracted tensor multiplication} \cite{Salencon_1996}, which we shall formally define in a moment, functions very much like an ordinary matrix multiplication. 

\begin{definition}\label{Mul} For any two tensors of rank four $A := (A_{i, \, k, \, l, \, j})_{(i, \, k, \, l, \, j) \in I_1 \times I^2_2 \times I_1}$ and $B := (B_{i, \, k, \, l, \, j})_{(i, \, k, \, l, \, j) \in I_1 \times I^2_2 \times I_1}$, their \emph{doubly contracted tensor product} $A : B$ will be understood as the tensor of rank four defined by
\begin{equation}
\begin{split}
& \left( \left( A : B \right)_{i, \, k, \, l, \, j} \right)_{(i, \, k, \, l, \, j) \in I_1 \times I^2_2 \times I_1} \\
& \hspace{2.0cm} :=
\left( \sum_{n \in I_2} \sum_{m \in I_1} A_{i, \, k, \, n, \, m} \,  B_{m, \, n, \, l, \, j} \right)_{(i, \, k, \, l, \, j) \in I_1 \times I^2_2 \times I_1}
.
\end{split}
\label{eq:Verknüpfung}
\end{equation}
\end{definition}
\noindent Subsequently, we shall prove  that this product is compatible with the norm that defines  $\A$.

\begin{proposition}\label{geil} Let $\mathcal{A}_1$ and $\mathcal{A}_2$ be solid   Banach algebras. Then 
\begin{equation}\label{Unsere_supergeile_Algebra}
    \left( \mathcal{A}, \, \cdot \, , \, + \, , \, : \,    ,\,
    \| \cdot \|_{\mathcal{A}}\right)
\end{equation} is a  Banach algebra.
\end{proposition}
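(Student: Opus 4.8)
The plan is to establish that each of $\widetilde{\mathcal{A}}_1$ and $\widetilde{\mathcal{A}}_2$ is, on its own, a Banach algebra under the doubly contracted product $:$ of Definition~\ref{Mul}, and then to conclude by observing that $\mathcal{A}=\widetilde{\mathcal{A}}_1\cap\widetilde{\mathcal{A}}_2$ is their intersection, equipped with the maximum of the two norms. Since the two cases are symmetric, I would carry out the argument for $\widetilde{\mathcal{A}}_1$ and merely indicate the changes needed for $\widetilde{\mathcal{A}}_2$. The only properties of the solid Banach algebras $\mathcal{A}_1,\mathcal{A}_2$ I would use are solidity, submultiplicativity of $\|\cdot\|_{\mathcal{A}_i}$, completeness, and the continuous inclusion $\mathcal{A}_i\subset\mathcal{B}(\ell^2(I_i))$ (so $\|\cdot\|_{\mathcal{B}(\ell^2(I_i))}\le C_i\|\cdot\|_{\mathcal{A}_i}$), all of which hold for the solid spectral matrix algebras of Definition~\ref{def:spectral-algebra}. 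Throughout, write $\|\cdot\|_{\mathrm{op}}$ for the operator norm on $\ell^2(I_1)$ or $\ell^2(I_2)$ as appropriate and set $M^A:=\big(\|A|_{\{i\}\times I_2^2\times\{j\}}\|_{\mathrm{op}}\big)_{(i,j)\in I_1^2}$, so that $\|A\|_{\widetilde{\mathcal{A}}_1}=\|M^A\|_{\mathcal{A}_1}$.

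First I would check that $(\widetilde{\mathcal{A}}_1,\|\cdot\|_{\widetilde{\mathcal{A}}_1})$ is a Banach space. Applying subadditivity and absolute homogeneity of $\|\cdot\|_{\mathrm{op}}$ slice by slice gives $M^{A+B}\le M^A+M^B$ and $M^{\alpha A}=|\alpha|\,M^A$ entrywise; since these are nonnegative matrices, solidity of $\mathcal{A}_1$ upgrades this to $\|A+B\|_{\widetilde{\mathcal{A}}_1}\le\|A\|_{\widetilde{\mathcal{A}}_1}+\|B\|_{\widetilde{\mathcal{A}}_1}$ and $\|\alpha A\|_{\widetilde{\mathcal{A}}_1}=|\alpha|\,\|A\|_{\widetilde{\mathcal{A}}_1}$ (so $\widetilde{\mathcal{A}}_1$ is a linear subspace of $\mathcal{T}$), while $\|A\|_{\widetilde{\mathcal{A}}_1}=0$ forces $M^A=0$, hence every slice of $A$, hence $A$, to vanish. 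For completeness, let $(A^{(n)})_n$ be Cauchy in $\widetilde{\mathcal{A}}_1$: the reverse triangle inequality on each slice together with solidity makes $(M^{A^{(n)}})_n$ a Cauchy sequence in $\mathcal{A}_1$, hence convergent to some $M\in\mathcal{A}_1$, while the estimate $|A^{(n)}_{i,k,l,j}-A^{(m)}_{i,k,l,j}|\le\|(A^{(n)}-A^{(m)})|_{\{i\}\times I_2^2\times\{j\}}\|_{\mathrm{op}}\le C_1\|A^{(n)}-A^{(m)}\|_{\widetilde{\mathcal{A}}_1}$ makes each slice Cauchy in $\mathcal{B}(\ell^2(I_2))$, converging in operator norm to some $B_{i,j}$; assembling the $B_{i,j}$ into a tensor $A\in\mathcal{T}$, continuity of $\|\cdot\|_{\mathrm{op}}$ gives $M^A=M\in\mathcal{A}_1$, so $A\in\widetilde{\mathcal{A}}_1$, and letting $m\to\infty$ in $\|A^{(n)}-A^{(m)}\|_{\widetilde{\mathcal{A}}_1}$ (using solidity once more, together with $M^{A^{(n)}-A^{(m)}}\to M^{A^{(n)}-A}$ entrywise) gives $A^{(n)}\to A$.

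Next I would treat the product. A direct computation from Definition~\ref{Mul}, reading each slice $A|_{\{i\}\times I_2^2\times\{j\}}$ as an operator on $\ell^2(I_2)$, gives
\begin{equation*}
(A:B)\big|_{\{i\}\times I_2^2\times\{j\}}=\sum_{m\in I_1}A\big|_{\{i\}\times I_2^2\times\{m\}}\,B\big|_{\{m\}\times I_2^2\times\{j\}}\qquad\text{in }\mathcal{B}(\ell^2(I_2)),
\end{equation*}
the product on the right being operator composition, so that $:$ is nothing but block-operator-matrix multiplication over $I_1$ with $\mathcal{B}(\ell^2(I_2))$-valued entries. To see that $A:B$ is well defined, bound a matrix entry of a slice of $A$ (resp.\ $B$) by the $\ell^2(I_2)$-norm of the relevant row (resp.\ column), which is at most that slice's operator norm; Cauchy--Schwarz in the $I_2$-summation followed by summation over $m\in I_1$ then gives $\sum_{n,m}|A_{i,k,n,m}|\,|B_{m,n,l,j}|\le(M^AM^B)_{i,j}<\infty$ because $M^A,M^B\in\mathcal{B}(\ell^2(I_1))$, so the series converges absolutely, $A:B\in\mathcal{T}$, and bilinearity (hence distributivity over $+$) is read off the formula. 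For submultiplicativity, the display together with subadditivity and submultiplicativity of $\|\cdot\|_{\mathrm{op}}$ gives $M^{A:B}_{i,j}\le(M^AM^B)_{i,j}$; as $\mathcal{A}_1$ is a Banach algebra, $M^AM^B\in\mathcal{A}_1$, and solidity then yields $M^{A:B}\in\mathcal{A}_1$ with $\|A:B\|_{\widetilde{\mathcal{A}}_1}\le\|M^AM^B\|_{\mathcal{A}_1}\le\|A\|_{\widetilde{\mathcal{A}}_1}\|B\|_{\widetilde{\mathcal{A}}_1}$; in particular $A:B\in\widetilde{\mathcal{A}}_1$. For associativity, applying the display twice makes both $((A:B):C)\big|_{\{i\}\times I_2^2\times\{j\}}$ and $(A:(B:C))\big|_{\{i\}\times I_2^2\times\{j\}}$ equal to the iterated operator sum $\sum_{m,m'}A|_{\{i\}\times I_2^2\times\{m'\}}\,B|_{\{m'\}\times I_2^2\times\{m\}}\,C|_{\{m\}\times I_2^2\times\{j\}}$, which converges absolutely in $\mathcal{B}(\ell^2(I_2))$ since $\sum_{m,m'}M^A_{i,m'}M^B_{m',m}M^C_{m,j}=(M^AM^BM^C)_{i,j}<\infty$ (Cauchy--Schwarz again, rows and columns of bounded operators lying in $\ell^2$); associativity of operator composition and Fubini for absolutely convergent operator-valued series then give $(A:B):C=A:(B:C)$. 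Thus $\widetilde{\mathcal{A}}_1$ is a Banach algebra; reading instead the slices $A|_{I_1\times\{(k,l)\}\times I_1}$ as operators on $\ell^2(I_1)$, for which the analogous identity $(A:B)|_{I_1\times\{(k,l)\}\times I_1}=\sum_{n\in I_2}A|_{I_1\times\{(k,n)\}\times I_1}\,B|_{I_1\times\{(n,l)\}\times I_1}$ holds, the same argument with $\mathcal{A}_2$ replacing $\mathcal{A}_1$ shows $\widetilde{\mathcal{A}}_2$ is a Banach algebra as well. Consequently $\mathcal{A}=\widetilde{\mathcal{A}}_1\cap\widetilde{\mathcal{A}}_2$, equipped with $\|\cdot\|_{\mathcal{A}}=\max\{\|\cdot\|_{\widetilde{\mathcal{A}}_1},\|\cdot\|_{\widetilde{\mathcal{A}}_2}\}$, is a linear subspace of $\mathcal{T}$ closed under $:$, the bound $\|A:B\|_{\mathcal{A}}\le\|A\|_{\mathcal{A}}\|B\|_{\mathcal{A}}$ follows by taking the maximum of the two submultiplicativity estimates, and completeness holds because a $\|\cdot\|_{\mathcal{A}}$-Cauchy sequence is Cauchy in each $\widetilde{\mathcal{A}}_i$ and its limits there, both being entrywise limits of the same sequence, coincide and therefore lie in $\mathcal{A}$.

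The main obstacle I anticipate is bookkeeping rather than any hard estimate: establishing the two block-product identities with the correct input/output convention for the slices (this is harmless for the norms, since $\|M\|_{\mathcal{B}(\ell^2)}=\|M^{\top}\|_{\mathcal{B}(\ell^2)}$) and laying out the Cauchy--Schwarz bounds so that the defining series converge absolutely and solidity of $\mathcal{A}_1$ and $\mathcal{A}_2$ can actually be invoked. The one genuinely delicate passage is the completeness argument, where one has to move back and forth between convergence in $\mathcal{A}_i$, operator-norm convergence of the slices in $\mathcal{B}(\ell^2(I_2))$ (resp.\ $\mathcal{B}(\ell^2(I_1))$), and entrywise convergence of the rank-four tensors.
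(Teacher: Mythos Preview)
Your proof is correct and the heart of it---the block-product identity $(A:B)|_{\{i\}\times I_2^2\times\{j\}}=\sum_m A|_{\{i\}\times I_2^2\times\{m\}}\,B|_{\{m\}\times I_2^2\times\{j\}}$, followed by the entrywise bound $M^{A:B}\le M^AM^B$ and an appeal to solidity and submultiplicativity of $\mathcal{A}_1$---is exactly what the paper does. The differences are organizational and in level of detail: you first show that each of $\widetilde{\mathcal{A}}_1$, $\widetilde{\mathcal{A}}_2$ is a Banach algebra in its own right and then intersect, whereas the paper works directly with $\mathcal{A}$; more substantively, you treat completeness, associativity, and absolute convergence of the defining double series, all of which the paper's proof silently takes for granted. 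Your version is therefore more rigorous, at the cost of invoking the continuous embedding $\mathcal{A}_i\hookrightarrow\mathcal{B}(\ell^2(I_i))$ (for the entrywise bounds in the completeness and well-definedness steps), which strictly speaking is a spectral-algebra hypothesis rather than part of ``solid Banach algebra''; in the paper's context this is harmless, but it is worth flagging.
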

\begin{proof} First of all, we are going to show that the mapping $\| \cdot \|_{\mathcal{A}}$ is a norm. Indeed, the function $\| \cdot \|_{\mathcal{\widetilde A}_1}$ is a norm as it is the composition of the operator norm $\|  \cdot  \|_{\mathcal{B}( \ell^2 (I_2))}$ and the norm $\| \cdot \|_{\mathcal{A}_1}$. The map $\| \cdot \|_{\mathcal{\widetilde A}_2}$ is a norm for a similar reason. Finally,  $\| \cdot \|_{\mathcal{A}}$ is a norm as it is a maximum of the norms $\| \cdot \|_{\mathcal{\widetilde A}_1}$ and $\|  \cdot  \|_{\mathcal{\widetilde A}_2}$.

It remains to prove that 
\begin{equation} \label{eq:toto_100}
    \| A : B \|_{\mathcal{A}} \leqslant
    \| A \|_{\mathcal{A}} \| B \|_{\mathcal{A}}
\end{equation} for any $A$ and $B$ in $\A$. 
According to (\ref{eq:Verknüpfung}), for any given $(i, \, j) \in I_1^2$, the norm of the restriction $\left( A : B \right) \big\vert_{\{i\} \times I_2^2 \times \{j\}}$ of $A : B$  can be expressed as follows
\begin{equation*}
\left( A : B \right) \big\vert_{\{i\} \times I_2^2 \times \{j\}}   =
\sum_{n \in I_2} \sum_{m \in I_1}
A \big\vert_{\{i\} \times I_2 \times \{n\} \times \{m\}}
B \big\vert_{\{m\} \times \{n\} \times I_2 \times \{j\}}
 .
\label{eq:Loft-Satz_4_Webeis_c}
\end{equation*} Using the triangle inequality and the submultiplicativity of the operator norm $\left\| \, \cdot \, \right\|_{\mathcal{B}(\ell^2(I_2))}$ results in
\begin{equation}
\begin{split}
\left\|\left( A : B \right) \big\vert_{\{i\} \times I_2^2 \times \{j\}}\right\|_{\mathcal{B}(\ell^2(I_2))}
&= \left\| \sum_{n \in I_2} \sum_{m \in I_1}
A \big\vert_{\{i\} \times I_2 \times \{n\} \times \{m\}}
B \big\vert_{\{m\} \times \{n\} \times I_2 \times \{j\}}
\right\|_{\mathcal{B}(\ell^2(I_2))} \\ 
&   \leqslant
\sum_{m \in I_1} \left\| \sum_{n \in I_2}
A \big\vert_{\{i\} \times I_2 \times \{n\} \times \{m\}}
B \big\vert_{\{m\} \times \{n\} \times I_2 \times \{j\}}
\right\|_{\mathcal{B}(\ell^2(I_2))}  \\ 
&  \leqslant
\sum_{m \in I_1} \left\| 
A \big\vert_{\{i\} \times I_2^2 \times \{m\}}
\right\|_{\mathcal{B}(\ell^2(I_2))}
\left\| 
B \big\vert_{\{m\} \times I_2^2 \times \{j\}}
\right\|_{\mathcal{B}(\ell^2(I_2))}
 .
\end{split}
\label{eq:Loft-Satz_4_Webeis_d}
\end{equation}
Now $A$ and $B \in \mathcal{ \widetilde A}_1$ as both $A$ and $B \in \mathcal{A}$ and $\mathcal{A} = \mathcal{ \widetilde A}_1 \cap \mathcal{ \widetilde A}_2$. This, in its turn, implies that 
\begin{equation*}
\left(
\left\|
A \big\vert_{\{i\} \times I_2^2 \times \{m\}}
\right\|_{\mathcal{B}(\ell^2(I_2))}
\right)_{(i, \, m) \in I_1^2} \in \mathcal{A}_1,
\label{eq:Loft-Satz_4_Webeis_a}
\end{equation*} 
and
\begin{equation*}
\left(
\left\|
B \big\vert_{\{m\} \times I_2^2 \times \{j\}}
\right\|_{\mathcal{B}(\ell^2(I_2))}
\right)_{(m, \, j) \in I_1^2} \in \mathcal{A}_1
\, .
\label{eq:Loft-Satz_4_Webeis_b}
\end{equation*} 
Moreover, combining \eqref{eq:Loft-Satz_4_Webeis_d} and the fact that  $\mathcal{A}_1$ is a solid Banach algebra results in
\begin{equation}
\begin{split}
 \|
A :&\,  B
 \|_{\mathcal{\widetilde A}_1}
  =
\left\|
\left(
\left\|
\left( A : B \right) \big\vert_{\{i\} \times I_2^2 \times \{j\}}
\right\|_{\mathcal{B}(\ell^2(I_2))}
\right)_{(i, \, j) \in I_1^2}
\right\|_{\mathcal{A}_1} 
\\ 
&\leqslant \left\|
\left(
\sum_{m \in I_1} \left\| 
A \big\vert_{\{i\} \times I_2^2 \times \{m\}}
\right\|_{\mathcal{B}(\ell^2(I_2))}
\left\| 
B \big\vert_{\{m\} \times I_2^2 \times \{j\}}
\right\|_{\mathcal{B}(\ell^2(I_2))}
\right)_{(i, \, j) \in I_1^2}
\right\|_{\mathcal{A}_1} \\ 
&   \leqslant
 \left\|\left( \left\| 
A \big\vert_{\{i\} \times I_2^2 \times \{m\}}
\right\|_{\mathcal{B}(\ell^2(I_2))}
\right)_{(i,m)\in I_1^2}\right\|_{\mathcal{A}_1} 
\left\| \left(\left\| 
B \big\vert_{\{n\} \times I_2^2 \times \{j\}}
\right\|_{\mathcal{B}(\ell^2(I_2))}
\right)_{(m,j)\in I_1^2}\right\|_{\mathcal{A}_1} \\ 
&   =
\left\|
A
\right\|_{\mathcal{\widetilde A}_1}
\left\|
B
\right\|_{\mathcal{\widetilde A}_1}
.
\end{split}
\label{eq:Loft-Satz_4_Webeis_f}
\end{equation} 
Using similar arguments we infer that
\begin{equation}
\begin{split}
\left\| A : B
\right\|_{\mathcal{\widetilde A}_2}
 \leqslant
\left\| A
\right\|_{\mathcal{\widetilde A}_2} \left\| B 
\right\|_{\mathcal{\widetilde A}_2}
.
\end{split}
\label{eq:Loft-Satz_4_Webeis_4}
\end{equation}
Combining 
(\ref{eq:Loft-Satz_4_Webeis_f}) and (\ref{eq:Loft-Satz_4_Webeis_4}) implies (\ref{eq:toto_100}). Indeed
\begin{equation*}
\begin{split}
      \| A : B \|_{\mathcal{A}} &  
    \leqslant \max \{ \left\|
  A : B
  \right\|_{\mathcal{\widetilde A}_1}
   ,
  \left\|
  A : B
  \right\|_{\mathcal{\widetilde A}_2}
  \} \\   &  
    \leqslant \max \{ \left\|
  A
  \right\|_{\mathcal{\widetilde A}_1}
  \left\|
  B
  \right\|_{\mathcal{\widetilde A}_1}
   ,
  \left\|
  A
  \right\|_{\mathcal{\widetilde A}_2}
  \left\|
  B
  \right\|_{\mathcal{\widetilde A}_2}
  \} \\  
  & \leqslant \max \{ \left\|
  A
  \right\|_{\mathcal{\widetilde A}_1}
   ,
  \left\|
  A
  \right\|_{\mathcal{\widetilde A}_2}
  \}
  \max \{ \left\|
  B
  \right\|_{\mathcal{\widetilde A}_1}
   ,
  \left\|
  B
  \right\|_{\mathcal{\widetilde A}_2}
  \} \\
   & = \| A \|_{\mathcal{A}} \| B \|_{\mathcal{A}} \, .
\end{split}
\end{equation*}

\end{proof}

Next, we define the adjoint of a tensor of rank four. 

\begin{definition} The adjoint $A^*$ of a tensor of rank four $A\in\mathcal{T}$ will be understood as the tensor of rank four defined by
\begin{equation}
\left( \left( A^* \right)_{i, \, k, \, l, \, j} \right)_{(i, \, k, \, l, \, j) \in I_1 \times I^2_2 \times I_1} :=
\left(\, \overline{A_{j, \, l, \, k, \, i}} \, \right)_{(i, \, k, \, l, \, j) \in I_1 \times I^2_2 \times I_1},
\label{eq:Involution}
\end{equation} where the bar over $A_{j, \, l, \, k, \, i}$ stands for its complex conjugation. 
\label{eq:Involution_Addition_Multiplikation_Verknüpfung}
\end{definition} 

\noindent This definition of the adjoint is again similar to that of a matrix, where the order of the indices is reversed and the elements of the matrix are complex conjugated.

\begin{proposition}\label{supergeil1}  Let $\mathcal{A}_1$ and $\mathcal{A}_2$ be solid   Banach algebras. Then  
\begin{equation}\label{Unsere_supergeile_Algebra1}
    \left( \mathcal{A}, \, \cdot \, , \, + \, , \, : \,     
    ,\, \ast\, ,\, \| \cdot \|_{\mathcal{A}}\right)
\end{equation} is an involutive  Banach algebra. 
\end{proposition}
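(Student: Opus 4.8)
The plan is to leverage Proposition~\ref{geil}, which already gives that $(\mathcal{A},\,\cdot\,,+,:,\|\cdot\|_{\mathcal{A}})$ is a Banach algebra, so that only three properties of the map $A\mapsto A^*$ of \eqref{eq:Involution} remain to be checked: that it is a conjugate-linear involution with $(A^*)^*=A$; that it is an anti-homomorphism, $(A:B)^*=B^*:A^*$; and that it is isometric, $\|A^*\|_{\mathcal{A}}=\|A\|_{\mathcal{A}}$ — the last point in particular guaranteeing that $A^*\in\mathcal{A}$ whenever $A\in\mathcal{A}$.

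First I would dispatch the algebraic identities. That $(A^*)^*=A$ and that $*$ is conjugate-linear follow at once from the pointwise formula \eqref{eq:Involution}, since reversing the order of the four indices twice and conjugating twice is the identity. For the anti-homomorphism law I would expand both sides: using \eqref{eq:Verknüpfung},
\[
\big((A:B)^*\big)_{i,k,l,j}=\overline{(A:B)_{j,l,k,i}}=\sum_{n\in I_2}\sum_{m\in I_1}\overline{A_{j,l,n,m}}\;\overline{B_{m,n,k,i}},
\]
while on the other hand
\[
\big(B^*:A^*\big)_{i,k,l,j}=\sum_{n\in I_2}\sum_{m\in I_1}(B^*)_{i,k,n,m}(A^*)_{m,n,l,j}=\sum_{n\in I_2}\sum_{m\in I_1}\overline{B_{m,n,k,i}}\;\overline{A_{j,l,n,m}},
\]
and the two expressions coincide because scalar multiplication in $\mathbb{C}$ is commutative.

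The substantive step is the norm identity. The key observation I would record first is that, for fixed $(i,j)\in I_1^2$, the slice $A^*\vert_{\{i\}\times I_2^2\times\{j\}}$ — the matrix on $\ell^2(I_2)$ with $(k,l)$-entry $(A^*)_{i,k,l,j}=\overline{A_{j,l,k,i}}$ — is precisely the operator adjoint of the slice $A\vert_{\{j\}\times I_2^2\times\{i\}}$, in which the two $I_1$-indices have been interchanged. Since the operator norm on $\mathcal{B}(\ell^2(I_2))$ is invariant under adjunction, the $I_1^2$-indexed matrix of operator norms that enters \eqref{eq:us_A1-tilde} for $A^*$ is the transpose of the corresponding matrix for $A$; the identical reasoning applied to two-dimensional slices shows that the $I_2^2$-indexed matrix of \eqref{eq:us_A2-tilde} for $A^*$ is the transpose of that for $A$. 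Both auxiliary matrices have non-negative real entries, so for each of them the transpose coincides with the adjoint; since the involution on $\mathcal{A}_1$ (resp.\ $\mathcal{A}_2$) is the matrix adjoint and is isometric, these transposes lie in $\mathcal{A}_1$ (resp.\ $\mathcal{A}_2$) with unchanged norm. Hence $\|A^*\|_{\widetilde{\mathcal{A}}_1}=\|A\|_{\widetilde{\mathcal{A}}_1}$ and $\|A^*\|_{\widetilde{\mathcal{A}}_2}=\|A\|_{\widetilde{\mathcal{A}}_2}$, and therefore $\|A^*\|_{\mathcal{A}}=\|A\|_{\mathcal{A}}$ by \eqref{eq:us_grosse_Norm}; in particular $A\in\mathcal{A}$ forces $A^*\in\mathcal{A}$. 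Combined with Proposition~\ref{geil}, this yields that \eqref{Unsere_supergeile_Algebra1} is an involutive Banach algebra.

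I expect the one point that really requires care to be the bookkeeping in the previous paragraph: one must see that reversing all four indices in \eqref{eq:Involution} induces, on the auxiliary matrices of operator norms, not the identity but the transpose (because of the swap of the two outer, resp.\ the two inner, indices), and that transpose-invariance of $\mathcal{A}_1$ and $\mathcal{A}_2$ with preserved norm is then available because their involution is the isometric matrix adjoint, which on real matrices coincides with the transpose. I would also flag that solidity of $\mathcal{A}_1,\mathcal{A}_2$ — although used in Proposition~\ref{geil} — is not what is needed here: a matrix need not dominate its transpose entrywise, so solidity alone would not provide transpose-invariance.
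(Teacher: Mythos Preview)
Your proposal is correct and follows essentially the same route as the paper: both verify the algebraic identities of $*$ directly from \eqref{eq:Involution} and \eqref{eq:Verknüpfung}, then establish $\|A^*\|_{\mathcal{A}}=\|A\|_{\mathcal{A}}$ by observing that the slice $A^*\vert_{\{i\}\times I_2^2\times\{j\}}$ is the matrix adjoint of $A\vert_{\{j\}\times I_2^2\times\{i\}}$, so that the auxiliary $I_1^2$-matrix of operator norms for $A^*$ is the transpose (hence, being real, the adjoint) of that for $A$, and invoke the isometric involution on $\mathcal{A}_1$ and $\mathcal{A}_2$. Your commentary that solidity is not the mechanism behind transpose-invariance here is a worthwhile clarification not made explicit in the paper.
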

\begin{proof}  Clearly, the adjoint of $A\in\mathcal{A}$ satisfies the following properties $\left( A^* \right)^* = A$, $(\alpha A)^\ast=\overline{\alpha} A^\ast$ and $(A+B)^\ast=A^\ast+ B^\ast$ and therefore qualifies as an involution.
Moreover, we observe that, formally,
\begin{equation}
\begin{split}
\left( A : B \right)_{i, \, k, \, l , \, j}^* &
= \sum_{m \in I_1} \sum_{n \in I_2} \overline{ A_{j, \, l, \, n , \, m} }\, \overline{B_{m, \, n, \, k , \, i} }
= \sum_{m \in I_1} \sum_{n \in I_2}   \overline{B_{m, \, n, \, k , \, i}} \,
\overline{ A_{j, \, l, \, n , \, m}} \\
& = \sum_{m \in I_1} \sum_{n \in I_2} (B^*)_{i, \, k, \, n , \, m} \, (A^*)_{m, \, n, \, l , \, j} 
= \left( B^* : A^* \right)_{i, \, k, \, l , \, j}
\end{split}
\label{eq:Involution_Webeis}
\end{equation} 
holds for any $(i, \, k, \, l , \, j) \in I_1 \times I_2^2 \times I_1$. 
The product on the right hand side is well-defined whenever $A^\ast, B^\ast\in\mathcal{A}$.
To prove that $\A$ is an involutive Banach algebra it thus remains  to show that 
\begin{equation*}
    \| A^* \|_{\mathcal{A}} =
    \| A \|_{\mathcal{A}}
\label{eq:Involution_tata}
\end{equation*}
for every $A\in \A.$
We note that the restriction of $A^\ast$ can be understood as the adjoint of a matrix
\begin{equation*}
\left( A^* \right) \big\vert_{\{i\} \times I_2^2 \times \{j\}} 
= \left( A \big\vert_{\{j\} \times I_2^2 \times \{i\}} \right)^*
 .
\label{eq:tata_1}
\end{equation*} 
This shows that
\begin{equation*}
\left\|
   ( A^\ast) \big\vert_{\{i\} \times I_2^2 \times \{i\}}    
\right\|_{\mathcal{B}(\ell^2(I_2))} =
\left\|
A \big\vert_{\{j\} \times I_2^2 \times \{i\}}
\right\|_{\mathcal{B}(\ell^2(I_2))},
\label{eq:tata_2}
\end{equation*} 
and so
\begin{equation*}
\begin{split}
\|A^\ast\|_{\widetilde{\A}_1}&= \left\|
\left(
\left\|
A \big\vert_{\{j\} \times I_2^2 \times \{i\}}
\right\|_{\mathcal{B}(\ell^2(I_2))}
\right)_{(i, \, j) \in I_1^2}
\right\|_{\mathcal{A}_1} 
\\
& =
\left\|
\left(
\left(
\left\|
A \big\vert_{\{i\} \times I_2^2 \times \{j\}}
\right\|_{\mathcal{B}(\ell^2(I_2))}
 \right)^*\right)_{(i, \, j) \in I_1^2}
\right\|_{\mathcal{A}_1} 
\\
&   =
\left\|
\left(
\left\|
A \big\vert_{\{i\} \times I_2^2 \times \{j\}}
\right\|_{\mathcal{B}(\ell^2(I_2))}
\right)_{(i, \, j) \in I_1^2}
\right\|_{\mathcal{A}_1} =
\| A \|_{\mathcal{\widetilde A}_1}
\end{split}
\label{eq:tata_3}
\end{equation*}
for every $A\in\A$. 
Using similar arguments we conclude that
\begin{equation*}
    \| A^* \|_{\mathcal{\widetilde A}_2} =
    \| A \|_{\mathcal{\widetilde A}_2},\quad A\in\A,
\label{eq:Involution_tata_2}
\end{equation*} 
which in turn implies that $\|A^\ast\|_{\A}=\|A\|_{\A}$, $A\in\A$.
\end{proof}

\noindent With all the preparatory work in place, our main result now follows almost immediately from previous propositions.
\begin{theorem}\label{main}
    Let $\A_1$ and $\A_2$ be two solid spectral algebras such that $\widetilde{\mathcal{A}}_1$ and $\widetilde{\mathcal{A}}_2$ are inverse-closed. Then $\A$ is an involutive inverse-closed Banach algebra.
\end{theorem}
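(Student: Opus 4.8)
The Banach-algebra and involution structure is already secured by Propositions~\ref{geil} and~\ref{supergeil1}, so the only thing left to prove is inverse-closedness. My plan is to exhibit $\A$, $\widetilde{\A}_1$ and $\widetilde{\A}_2$ as unital subalgebras of a single ambient unital Banach algebra, namely $\B(\ell^2(I_1\times I_2))$, and then obtain inverse-closedness of $\A=\widetilde{\A}_1\cap\widetilde{\A}_2$ as an immediate consequence of inverse-closedness of the two factors.

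First I would set up the identification. Regarding the two $I_2$-indices of a tensor $A\in\mathcal{T}$ as ``inner'' and the two $I_1$-indices as ``outer'', $A$ corresponds to the operator-valued (block) matrix $\big(A \big\vert_{\{i\}\times I_2^2\times\{j\}}\big)_{(i,j)\in I_1^2}$ whose entries act on $\ell^2(I_2)$; a direct check against Definitions~\ref{Mul} and~\ref{eq:Involution_Addition_Multiplikation_Verknüpfung} shows that under this identification the doubly contracted product $A:B$ is ordinary operator composition and the adjoint $A^\ast$ is the Hilbert-space adjoint. A vector-valued Schur test then yields, for every $A\in\widetilde{\A}_1$, the bound $\|A\|_{\B(\ell^2(I_1\times I_2))}\le\big\|\big(\|A \big\vert_{\{i\}\times I_2^2\times\{j\}}\|_{\B(\ell^2(I_2))}\big)_{(i,j)\in I_1^2}\big\|_{\B(\ell^2(I_1))}\le C_1\,\|A\|_{\widetilde{\A}_1}$, where $C_1$ is the norm of the continuous embedding $\A_1\hookrightarrow\B(\ell^2(I_1))$ from Definition~\ref{def:spectral-algebra}(i). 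Hence $\widetilde{\A}_1$ embeds continuously into $\B(\ell^2(I_1\times I_2))$, and, being closed under the product $:$ by the computation in Proposition~\ref{geil}, it is a subalgebra there; the same discussion applies verbatim to $\widetilde{\A}_2$ (now reading the $I_1$-indices as inner) and hence to $\A=\widetilde{\A}_1\cap\widetilde{\A}_2$. Finally, the tensor $E$ with $E_{i,k,l,j}=\delta_{ij}\delta_{kl}$ satisfies $E:A=A:E=A$ for all $A$, and it lies in $\A$ because its $(i,j)$-slice has operator norm $\delta_{ij}$, so the associated scalar matrix is the identity on $\ell^2(I_1)$, which belongs to every spectral matrix algebra $\A_1$ (and $\A_2$), those being inverse-closed and hence unital; thus $E$ is the common unit of $\A$, $\widetilde{\A}_1$, $\widetilde{\A}_2$ and $\B(\ell^2(I_1\times I_2))$, and it agrees with the identity operator.

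Given these identifications, inverse-closedness of $\A$ follows in one line. Let $A\in\A$ be invertible in $\B(\ell^2(I_1\times I_2))$ and set $B:=A^{-1}$; note that $A:B=B:A=E$, so $B$ is also the inverse of $A$ inside each of the tensor algebras $\widetilde{\A}_1$, $\widetilde{\A}_2$, $\A$ whenever it belongs to them. Since $A\in\widetilde{\A}_1$ and $\widetilde{\A}_1$ is inverse-closed in $\B(\ell^2(I_1\times I_2))$ by hypothesis, $B\in\widetilde{\A}_1$; since $A\in\widetilde{\A}_2$ and $\widetilde{\A}_2$ is inverse-closed, $B\in\widetilde{\A}_2$. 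Therefore $B\in\widetilde{\A}_1\cap\widetilde{\A}_2=\A$. Together with Proposition~\ref{supergeil1} this shows that $\A$ is an involutive inverse-closed Banach algebra, completing the proof.

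The only step that requires genuine (if routine) care is the middle one: checking that the combinatorics of Definitions~\ref{Norm}, \ref{Mul} and~\ref{eq:Involution_Addition_Multiplikation_Verknüpfung} really is the block-operator-matrix calculus on $\ell^2(I_1\times I_2)$ --- in particular the Schur-test inclusions $\widetilde{\A}_1,\widetilde{\A}_2\subset\B(\ell^2(I_1\times I_2))$ and the agreement of ``invertible in the tensor algebra'' with ``invertible in $\B(\ell^2(I_1\times I_2))$'' --- so that the hypothesis may be invoked against one and the same ambient algebra. Once that is pinned down, the theorem is simply the elementary observation that the intersection of two inverse-closed subalgebras of a common unital Banach algebra, both containing a fixed invertible element, is inverse-closed; all the real mathematical content lies upstream, in Propositions~\ref{geil}--\ref{supergeil1} and, above all, in the standing hypothesis that $\widetilde{\A}_1$ and $\widetilde{\A}_2$ are inverse-closed, which is known to hold for the usual matrix algebras by \cite{Koehldorfer_2024}.
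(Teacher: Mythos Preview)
Your proof is correct and follows exactly the same strategy as the paper: invoke Propositions~\ref{geil} and~\ref{supergeil1} for the involutive Banach-algebra structure, and then obtain inverse-closedness of $\A=\widetilde{\A}_1\cap\widetilde{\A}_2$ from the assumed inverse-closedness of each factor in the common ambient algebra $\B(\ell^2(I_1\times I_2))$. The paper's proof is a three-line version of the same argument; you have simply spelled out the identifications (block-matrix calculus, the embedding $\widetilde{\A}_n\hookrightarrow\B(\ell^2(I_1\times I_2))$, the common unit $E$) that the paper leaves implicit.
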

\begin{proof}
The only thing that remains to be proved is that $\A$ is inverse-closed. Let $A\in\A$ be invertible in $\mathcal{B}(\ell^2(I_1\times I_2))$. Then $A^{-1}\in \widetilde{\A}_1$ and $A^{-1}\in\widetilde{\A}_2$ as both   $\mathcal{\widetilde A}_1$ and $\mathcal{\widetilde A}_2$ are  inverse-closed. Therefore, $A^{-1}\in\A$.
\end{proof}

\noindent The assumption that the algebras $\mathcal{\widetilde A}_1$ and $\mathcal{\widetilde A}_2$ are inverse-closed seems quite sensible to us, as all the common inverse closed algebras satisfy this condition. This follows from the results of \cite{Koehldorfer_2024} 
 where operator-valued matrix algebras  were considered. For $I\subset \R^d$, $\vartheta_s(i)=(1+|i|)^s$, and $\vartheta$ any weight function,
 the     Jaffard algebra 
$$
\mathcal{J}_s(I):=\Big\{A:\ \sup_{(i,j)\in I^2}|A_{i,j}|\vartheta_s(i-j)<\infty\Big\},
$$
the weighted Schur-type algebras 
$$
\mathcal{S}^p_\delta(I):=\Big\{A:\  \|A\|_{\mathcal{S}_\delta^p(I)}<\infty\Big\},
$$ 
where 
\begin{align*}
\|A&\|_{\mathcal{S}_\delta^p(I)} :=\hspace{-1pt}\max\hspace{-1pt}\left\{\hspace{-1pt}\sup_{ i \in I}\left(\sum_{j\in I}|A_{i,j}|^p\vartheta_\delta(i-j)^{  p}\right)^{\frac{1}{p}},\sup_{ j \in I}\left(\sum_{i\in I}|A_{i,j}|^p\vartheta_\delta(i-j)^{ p} \right)^{\frac{1}{p}}\hspace{-1pt} \right\},
\end{align*}
and the Sjöstrand algebra 
$$
\mathcal{C}_\vartheta:=\Big\{A:\ \sum_{i\in\Z^d} \sup_{j\in\Z^d}|A_{j,j-i}|\vartheta(i)<\infty\Big\},
$$
are a priori defined as complex-valued infinite matrices. In the following, we will consider  operator-valued versions of the algebras defined above. 
For a solid matrix algebra $\A$ and a Hilbert space $H$, we define $$\mathcal{OP}(\A):=\big\{(A_{i,j})_{(i,j)\in I^2}:\ A_{i,j}\in \mathcal{B}(H)\ \text{and }(\|A_{i,j}\|_{\mathcal{B}(H)})_{(i,j)\in I^2}\in \A\big\}.$$
If $\A$ is, e.g., the Jaffard class, then this definition reads 
$$
\mathcal{OP}\big(\mathcal{J}_s(I)\big):=\Big\{A_{i,j}\in \mathcal{B}(H):\ \sup_{(i,j)\in I^2}\|A_{i,j}\|_{\mathcal{B}(H)}\vartheta_s(i-j)<\infty\Big\}.
$$
It was   shown in \cite[Theorems~3.10, 4.8 and 6.3]{Koehldorfer_2024} that the  operator-valued versions of the Jaffard class, the weighted Schur-type and the Sj\"ostrand algebra are indeed inverse-closed. 
We make use of these results to show the following.
\begin{corollary}
   Let $I_1,I_2\subset \R^d$ be  relatively separated discrete sets, and  let $\A_n$ be chosen from $\{\mathcal{J}_s(I_n),\mathcal{S}^1_{\delta}(I_n),\mathcal{C}_\nu\}$, $n=1,2$, with $s>d,\delta\in(0,1]$, and $\nu$ a sub-multiplicative and symmetric weight that satisfies $\lim_{n\to\infty}\nu(nz)^{1/n}=1,$ for every $z\in\R^d$. Then $\A$ is an inverse-closed involutive Banach algebra.
\end{corollary}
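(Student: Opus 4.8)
The plan is to deduce the Corollary from Theorem~\ref{main}, for which one has to check, for $n=1,2$, that $\A_n$ is a solid spectral matrix algebra on $\ell^2(I_n)$ and that the induced tensor algebra $\widetilde{\A}_n$ is inverse-closed. The first point is classical: since $I_1$ and $I_2$ are relatively separated discrete, hence countable, subsets of $\R^d$, for $s>d$, $\delta\in(0,1]$ and $\nu$ submultiplicative, symmetric with $\lim_{m\to\infty}\nu(mz)^{1/m}=1$, the classes $\mathcal{J}_s(I_n)$, $\mathcal{S}^1_\delta(I_n)$ and $\mathcal{C}_\nu$ are solid spectral matrix algebras contained in $\mathcal{B}(\ell^2(I_n))$ \cite{Jaffard_1990,Sun_2005,Sjöstrand}; solidity is immediate from the form of their norms, and inverse-closedness of these scalar matrix algebras is precisely the content of the cited works.

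The substantive step is to reduce the inverse-closedness of $\widetilde{\A}_1$ (and, symmetrically, of $\widetilde{\A}_2$) to that of the operator-valued algebra $\mathcal{OP}(\A_1)$ over the Hilbert space $H=\ell^2(I_2)$. I would show that the slice map
\[
A=(A_{i,k,l,j})\ \longmapsto\ \mathbf A:=\big(\mathbf A_{i,j}\big)_{(i,j)\in I_1^2},\qquad \big(\mathbf A_{i,j}\big)_{k,l}:=A_{i,k,l,j},\quad \mathbf A_{i,j}:=A\big\vert_{\{i\}\times I_2^2\times\{j\}}\in\mathcal{B}(\ell^2(I_2)),
\]
is an isometric $\ast$-isomorphism of $\big(\widetilde{\A}_1,\ :\ ,\ \ast,\ \|\cdot\|_{\widetilde{\A}_1}\big)$ onto $\mathcal{OP}(\A_1)$ equipped with the usual norm $\mathbf A\mapsto\big\|(\|\mathbf A_{i,j}\|_{\mathcal{B}(\ell^2(I_2))})_{(i,j)\in I_1^2}\big\|_{\A_1}$, the operator-valued matrix product, and the involution $\mathbf A\mapsto(\mathbf A_{j,i}^{\ast})_{(i,j)\in I_1^2}$. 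The equality of norms is exactly~\eqref{eq:us_A1-tilde}; that the doubly contracted product $A:B$ corresponds to $\mathbf A\mathbf B$ and the adjoint $A^\ast$ to $(\mathbf A_{j,i}^\ast)$ is a direct verification against Definitions~\ref{Mul} and~\ref{eq:Involution_Addition_Multiplikation_Verknüpfung}. The decisive observation is that, under the canonical unitary $\ell^2(I_1\times I_2)\cong\ell^2(I_1;\ell^2(I_2))$, the action of $A$ as an element of $\mathcal{B}(\ell^2(I_1\times I_2))$ (the one used in the proof of Theorem~\ref{main}) coincides with the action of $\mathbf A$ as an operator-valued matrix; hence "$A$ is invertible in $\mathcal{B}(\ell^2(I_1\times I_2))$" and "$\mathbf A$ is invertible in $\mathcal{B}(\ell^2(I_1;\ell^2(I_2)))$" are one and the same statement. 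Consequently $\widetilde{\A}_1$ is inverse-closed precisely when $\mathcal{OP}(\A_1)$ is inverse-closed over $\ell^2(I_2)$, and, interchanging the roles of $I_1$ and $I_2$, $\widetilde{\A}_2$ is inverse-closed precisely when $\mathcal{OP}(\A_2)$ is inverse-closed over $\ell^2(I_1)$.

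It then remains to invoke \cite[Theorems~3.10, 4.8 and 6.3]{Koehldorfer_2024}: under the stated hypotheses on $s$, $\delta$ and $\nu$, the operator-valued Jaffard, weighted Schur-type and Sjöstrand algebras are inverse-closed over any separable Hilbert space. Applying this with $H=\ell^2(I_2)$ (respectively $H=\ell^2(I_1)$) shows that $\widetilde{\A}_1$ (respectively $\widetilde{\A}_2$) is inverse-closed, so that Theorem~\ref{main} applies and yields that $\A$ is an inverse-closed involutive Banach algebra.

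The only genuinely delicate point is the identification carried out in the second paragraph, and inside it the claim that boundedly invertible "on $\ell^2(I_1\times I_2)$" and boundedly invertible "as an operator-valued matrix on $\ell^2(I_1;\ell^2(I_2))$" mean the same thing. This amounts to the standard fact that the unitary $\ell^2(I_1\times I_2)\cong\ell^2(I_1)\otimes\ell^2(I_2)$ intertwines the two representations of a rank-four tensor, so no real difficulty arises; but it is the place where one should be careful with conventions. All remaining verifications are routine manipulations of Definitions~\ref{Norm}, \ref{Mul} and~\ref{eq:Involution_Addition_Multiplikation_Verknüpfung}.
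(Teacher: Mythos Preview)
Your proposal is correct and follows essentially the same approach as the paper: both verify that $\A_1,\A_2$ are solid spectral matrix algebras, identify $\widetilde{\A}_n$ with $\mathcal{OP}(\A_n)$ via the slice map, invoke \cite{Koehldorfer_2024} for inverse-closedness of the operator-valued algebras, and then apply Theorem~\ref{main}. The paper compresses the identification $\mathcal{OP}(\A_1)=\widetilde{\A}_1$ into a single sentence (``it is straightforward to show''), whereas you spell out the isometric $\ast$-isomorphism and the compatibility of invertibility under $\ell^2(I_1\times I_2)\cong\ell^2(I_1;\ell^2(I_2))$---this is exactly the content the paper leaves implicit.
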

\begin{proof} 
Let $A\in \A$ be a rank-four tensor. For any given pair of indices $(i,  i')\in I_1^2$, $A_{\{i\}\times I_2^2\times \{i'\}}$   defines a bounded operator on $\ell^2(I_2)$ and it is straightforward to show that   $\mathcal{OP}(\A_1)=\widetilde{\A}_1$. Consequently, $\widetilde{\A}_1$ is inverse-closed according to the results of \cite{Koehldorfer_2024}. A similar argument holds for $\widetilde{\A}_2$. Applying Theorem~\ref{main} then concludes the proof. 
\end{proof}

\begin{remark}
    It is not known to us whether the operator-valued analog of a spectral matrix algebra is necessarily inverse closed. An affirmative answer to this question would allow us to show that the tensor algebra $\A$ is inverse-closed as long as $\A_1$ and $\A_2$ are inverse-closed.
\end{remark}

\section{Solidity in co-orbit theory}\label{sec:solidity}

\noindent While solidity of a spectral matrix algebra $\A$ is one of the standard assumptions in co-orbit theory \cite{Fornasier_2005,Balazs_2017}, the tensor product algebra that we designed and use in this study is  not solid in general. 
We are, however, convinced that solidity is not essential to prove the main statements of co-orbit theory. Instead, it is the other assumptions outlined in Definitions~\ref{def:spectral-algebra} and \ref{eq:Zugelassenes_Gewicht_} that play the central roles. While a detailed discussion confirming this claim would go beyond the scope of this paper, we briefly discuss how the properties of the Gram matrix are used to derive the results of co-orbit theory.
A recurring argument is to
rewrite certain operations as the multiplication of the (cross-)Gram matrix    by an  $\ell^p_w(I)$-sequence and then use that elements of the spectral algebra define bounded
operators on each $\ell^p_w(I)$, $p \in [1,  \infty ]$, as long as $w$ is an $\A$-admissible weight (see Definition \ref{eq:Zugelassenes_Gewicht_}). This, for example, allows one to show boundedness of the synthesis operator or that the co-orbit spaces are independent of the particular  self-localised frame that is used to define them. Inverse-closedness on the other hand  is used to prove the fundamental result of co-orbit theory that the canonical dual of a self-localised frame is also self-localised.

To the best of our knowledge, there are only two occasions where solidity is indeed used in the literature on   co-orbit theory, namely in \cite[Theorem~3]{Balazs_2017}, where the argument could be readily  adapted to work in the operator-valued setting that we consider in this paper, and in \cite[Definition~4 \& Lemma~2.2]{Fornasier_2005}, where the somewhat different notion of $\sharp$-self-localisation is introduced and used to describe the mutual localisation of two frames with indices in two distinct non-uniform index sets.

\section{Localised frames for the tensor product \texorpdfstring{$H_1\otimes H_2$}{H1xH2}}

\noindent The following result is an almost immediate consequence of   Definition \ref{Norm} being applied to the tensor product of two self-localised frames.

\begin{proposition}\label{mega}
    Let $\Psi_1$ be an $\mathcal{A}_1$-self-localised frame for   $H_1$ and $\Psi_2$ an $\mathcal{A}_2$-self-localised frame for  $H_2$ where $\mathcal{A}_1$ and $\mathcal{A}_2$ are two spectral matrix algebras such that $\mathcal{\widetilde A}_1$ and $\mathcal{\widetilde A}_2$ are inverse-closed. Then the tensor product frame $\Psi_1 \otimes \Psi_2$ is an $\mathcal{A}$-self-localised frame for   $H_1 \otimes H_2$. 
\end{proposition}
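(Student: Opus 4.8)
The plan is to read off the two constituent norms of $\|G_{\Psi_1\otimes\Psi_2}\|_{\mathcal{A}}$ directly from Definition~\ref{Norm}, exploiting the Kronecker structure of the Gram tensor recorded in (\ref{eq:Gramsche_Hypermatrix}). Once both $\|G_{\Psi_1\otimes\Psi_2}\|_{\widetilde{\mathcal{A}}_1}$ and $\|G_{\Psi_1\otimes\Psi_2}\|_{\widetilde{\mathcal{A}}_2}$ are seen to be finite, we obtain $G_{\Psi_1\otimes\Psi_2}\in\mathcal{A}=\widetilde{\mathcal{A}}_1\cap\widetilde{\mathcal{A}}_2$, and Theorem~\ref{main} says that $\mathcal{A}$ is an involutive, inverse-closed Banach algebra of rank-four tensors; since, by the very definition of $\widetilde{\mathcal{A}}_1$, every element of $\mathcal{A}$ acts boundedly on $\ell^2(I_1\times I_2)$, $\mathcal{A}$ plays for the tensor product frame exactly the role that a spectral matrix algebra plays in Definition~\ref{eq:Localisation_2_}. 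Hence $\Psi_1\otimes\Psi_2$ is $\mathcal{A}$-self-localised.

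For the first norm, fix $(i,j)\in I_1^2$. By (\ref{eq:Gramsche_Hypermatrix}), the slice $G_{\Psi_1\otimes\Psi_2}\big\vert_{\{i\}\times I_2^2\times\{j\}}$ is the scalar $\langle\psi_{1,j},\psi_{1,i}\rangle_{H_1}$ times the fixed matrix $\big(\langle\psi_{2,l},\psi_{2,k}\rangle_{H_2}\big)_{(k,l)\in I_2^2}$, which differs from $G_{\Psi_2}$ only by a transposition/conjugation and so has the same operator norm. Its norm in $\mathcal{B}(\ell^2(I_2))$ is therefore $|\langle\psi_{1,j},\psi_{1,i}\rangle_{H_1}|\,\|G_{\Psi_2}\|_{\mathcal{B}(\ell^2(I_2))}$, finite because $\Psi_2$ is a frame. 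Substituting into (\ref{eq:us_A1-tilde}) gives
\[
\|G_{\Psi_1\otimes\Psi_2}\|_{\widetilde{\mathcal{A}}_1}=\|G_{\Psi_2}\|_{\mathcal{B}(\ell^2(I_2))}\,\Big\|\big(|\langle\psi_{1,j},\psi_{1,i}\rangle_{H_1}|\big)_{(i,j)\in I_1^2}\Big\|_{\mathcal{A}_1}.
\]
The matrix $\big(|\langle\psi_{1,j},\psi_{1,i}\rangle_{H_1}|\big)_{(i,j)\in I_1^2}$ has the same entrywise moduli as the Hermitian matrix $G_{\Psi_1}\in\mathcal{A}_1$, so by solidity of $\mathcal{A}_1$ it lies in $\mathcal{A}_1$ with $\mathcal{A}_1$-norm at most $\|G_{\Psi_1}\|_{\mathcal{A}_1}$; hence $\|G_{\Psi_1\otimes\Psi_2}\|_{\widetilde{\mathcal{A}}_1}\le\|G_{\Psi_1}\|_{\mathcal{A}_1}\,\|G_{\Psi_2}\|_{\mathcal{B}(\ell^2(I_2))}<\infty$. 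The symmetric computation, restricting instead to $I_1\times\{(k,l)\}\times I_1$ as in (\ref{eq:us_A2-tilde}) and using solidity of $\mathcal{A}_2$, yields $\|G_{\Psi_1\otimes\Psi_2}\|_{\widetilde{\mathcal{A}}_2}\le\|G_{\Psi_2}\|_{\mathcal{A}_2}\,\|G_{\Psi_1}\|_{\mathcal{B}(\ell^2(I_1))}<\infty$. Taking the maximum, as in (\ref{eq:us_grosse_Norm}), gives $G_{\Psi_1\otimes\Psi_2}\in\mathcal{A}$, and the conclusion follows as explained above.

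I do not anticipate a genuine obstacle: Propositions~\ref{geil} and \ref{supergeil1} together with Theorem~\ref{main} have already carried out the structural work, and what remains is, as the surrounding text puts it, an almost immediate consequence of Definition~\ref{Norm}. The one point that deserves a line of care is the passage from $G_{\Psi_1}$ (resp.\ $G_{\Psi_2}$) to the matrix of operator norms of the slices of $G_{\Psi_1\otimes\Psi_2}$: this matrix is not literally $G_{\Psi_1}$ but one whose entries have the same moduli, so it is precisely the solidity of $\mathcal{A}_1$ (resp.\ $\mathcal{A}_2$) — part of the standing hypotheses on the factor algebras — that keeps it inside the algebra, and hence makes $\|G_{\Psi_1\otimes\Psi_2}\|_{\mathcal{A}}$ finite.
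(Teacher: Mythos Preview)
Your proposal is correct and follows essentially the same route as the paper: restrict the Gram tensor via (\ref{eq:Gramsche_Hypermatrix}), recognise each slice as a scalar multiple of the fixed Gram matrix of the other frame, and read off the two $\widetilde{\mathcal{A}}_n$-norms from Definition~\ref{Norm}. The only noteworthy difference is that you are more explicit than the paper about the absolute-value issue: the matrix appearing inside $\|\cdot\|_{\mathcal{A}_1}$ is $(|(G_{\Psi_1})_{i,j}|)_{(i,j)}$ rather than $G_{\Psi_1}$ itself, and you correctly invoke solidity of $\mathcal{A}_1$ to pass between them, whereas the paper's displayed computation silently identifies the two (which is harmless precisely because solidity gives $\|(|A_{i,j}|)\|_{\mathcal{A}_1}=\|A\|_{\mathcal{A}_1}$).
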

\begin{proof} According to (\ref{eq:Gramsche_Hypermatrix}) and  (\ref{eq:us_A1-tilde}),
\begin{equation*} \label{eq:us_A1-tilde_G}
\begin{split}
\left\| G_{\Psi_1 \otimes \Psi_2} \right\|_{\mathcal{\widetilde A}_1} & =
\left\|
\left(
\left\|
G_{\Psi_1 \otimes \Psi_2} \big\vert_{\{i\} \times I_2^2 \times \{j\}}
\right\|_{\mathcal{B}(\ell^2(I_2))}
\right)_{(i, \, j) \in I_1^2}
\right\|_{\mathcal{A}_1} \\
& =
\left\|
\left\|
G_{\Psi_2}
\right\|_{\mathcal{B}(\ell^2(I_2))}
\left((
G_{\Psi_1})_{i,\, j}
\right)_{(i, \, j) \in I_1^2}
\right\|_{\mathcal{A}_1} \\
& = \left\|
G_{\Psi_2}
\right\|_{\mathcal{B}(\ell^2(I_2))}
\left\|
G_{\Psi_1}
\right\|_{\mathcal{A}_1}
\end{split}
\end{equation*}
and, similarly,
\begin{equation*} \label{eq:us_A1-tilde_G_bis}
\left\| G_{\Psi_1 \otimes \Psi_2} \right\|_{\mathcal{\widetilde A}_2} 
= \left\|
G_{\Psi_1}
\right\|_{\mathcal{B}(\ell^2(I_1))}
\left\|
G_{\Psi_2}
\right\|_{\mathcal{A}_2} . 
\end{equation*}
Therefore, both $\left\| G_{\Psi_1 \otimes \Psi_2} \right\|_{\mathcal{\widetilde A}_1}$ and $\left\| G_{\Psi_1 \otimes \Psi_2} \right\|_{\mathcal{\widetilde A}_2}$ are finite as $G_{\Psi_1} \in \mathcal{A}_1\subset \mathcal{B}(\ell^2(I_1))$ and  $G_{\Psi_2} \in \mathcal{A}_2\subset \mathcal{B}(\ell^2(I_2))$. From this we infer that  $\left\| G_{\Psi_1 \otimes \Psi_2} \right\|_{\mathcal{A}} <\infty $ and so $G_{\Psi_1 \otimes \Psi_2} \in \mathcal{A}$.
\end{proof}

\noindent It should be pointed out here that   self-localisation with respect to $\A$ need not be restricted to sets of rank-one operators. Frames of Hilbert-Schmidt operators have been  recently studied, for example, in \cite{Henry_1} in the context of quantum harmonic analysis \cite{Skrettingland}. In that study, the authors developed a theory of operator-valued modulation spaces by analysing operator families of the form 
 $\{\pi(z_i)S\pi(w_k)^\ast\}_{(i,k)\in I^2}$, where $\pi(z)$ represents a time-frequency shift by $z\in \R^{2d}$ (see, e.g., \cite{Groechenig_2001}) and $S\in HS(L^2(\R^d),L^2(\R^d))$. They explored the localisation of operator-valued frames through the lens of classical co-orbit theory for integrable group representations.  To derive their results they made  extensive use of the connection between  operator-valued frames and  modulation spaces in double phase space. 
Our approach, on the other hand, is much more general and applicable to situations where no structure is available or where it is natural to combine different types of localisation.


\section*{Acknowledgements}

\noindent 
This research was funded by the Austrian Science Fund (FWF)  10.55776/P34624 (P.B. and D.B.) and 10.55776/PAT1384824 (M.S.).  
For open
access purposes, the authors have applied a CC BY public copyright license to any author-accepted manuscript
version arising from this submission.


\begin{thebibliography}{99}

\bibitem{Balazs_2008}
  P. Balazs,
  \textit{Matrix representation of operators using frames}.
  Sampl. Theory Signal Image Process. 7, (2008) 39-54.
    
\bibitem{Balazs_2008_bis}
  P. Balazs,
  \textit{Hilbert-Schmidt operators and frames --
classification, approximation by multipliers and algorithms}.
  Sampl. Int. J. Wavelets Multiresolution Inf. Process. 6, (2008) 315-330.  

 

\bibitem{Balazs_2011}
  P. Balazs, D. Stoeva and J.-P. Antoine,
  \textit{Classification of general sequences by frame-related operators}.
  Sampl. Theory Signal Image Process. 10, (2011) 151-170.

 

\bibitem{Balazs_2017}
  P. Balazs and K. Gröchenig,
  \textit{A Guide to localized frames and applications to Galerkin-like representations of operators.} In: I. Pesenson, Q. Le Gia, A. Mayeli, H. Mhaskar, D.-X. Zhou, Frames and other bases in abstract and function spaces.
  Birkhäuser, Cham (2017).

  \bibitem{Borup_2007}
 L. Borup and M. Nielsen,
  \textit{Frame decomposition of decomposition spaces}.
  J. Fourier Anal. Appl. 13(1), (2007) 39-70.

   \bibitem{Bytchenkoff_2020}
 D. Bytchenkoff and F. Voigtlaender,
  \textit{Design and properties of wave packet smoothness spaces}.
  J. Math. Pures Appl. 133, (2020) 185-262.

  \bibitem{Bytchenkoff_2021}
 D. Bytchenkoff,
  \textit{Construction of Banach frames and atomic decompositions
of anisotropic Besov spaces}.
  Ann. Mat. Pura Appl. 200, (2021) 1341-1365.

    \bibitem{Bytchenkoff_2025}
 D. Bytchenkoff,
  \textit{On near orthogonality of the Banach frames of the wave packet spaces}.
  Bull. Sci. Math., (2025) in press.

 \bibitem{Bytchenkoff_2024}
 D. Bytchenkoff, M. Speckbacher and P. Balazs
  \textit{Kernel theorems for operators on co-orbit spaces associated with localised frames}. 	arXiv:2402.18367, (2024).  

\bibitem{Christensen_2008}
  O. Christensen,
  \textit{Introduction to Frames and Riesz Bases}.
  Birkhäuser, Boston (2008).  

\bibitem{Duffin_1952}
  R. J. Duffin and A. C. Schaeffer,
  \textit{A class of nonharmonic Fourier series}.
  Trans. Amer. Math. Soc. 72, (1952) 341-154. 

\bibitem{Feichtinger_1988}
  H. G. Feichtinger and K. Gröchenig,
  \textit{A unified approach to atomic decomposition via integrable group representations.} In: Proc. Conference on Functions, Spaces and Applications, Lund 1986,
  Springer Lect. Notes Math. (1988) 52-73.

\bibitem{Feichtinger_1989} H. G. Feichtinger and K. Gröchenig, \textit{Banach spaces related to integrable group representations and their atomic decompositions, I}.  J. Funct. Anal. 86(2), (1989) 307-340.  

\bibitem{Feichtinger_1989_2}  H. G. Feichtinger and K. Gröchenig,  \textit{Banach spaces related to integrable group representations and their atomic decompositions. II}.  Monatsh. Math. 108(2-3), (1989) 129-148. 

\bibitem{Fock_1961}
  V. A. Fock, 
  \textit{The Theory of Space, Time and Gravitation}.
   State Publishing House of Physico-Mathematical Literature, Moscow (1961). 
 

\bibitem{Fornasier_2005}
  M. Fornasier and K. Gröchenig, 
  \textit{Intrinsic localization of frames}.
   Constr. Approx. 22, (2005) 395-415.   

\bibitem{Groechenig_1991}   K. Gröchenig, \textit{Describing functions: atomic decompositions versus frames}. Monatsh. Math. 112, (1991) 1-41.
  
\bibitem{Groechenig_2001}   K. Gröchenig, \textit{Foundations of Time-Frequency Analysis},  Birkhäuser, Basel (2001).


  

\bibitem{Groechenig_2004}
  K. Gröchenig,
  \textit{Localization of frames, Banach frames and the invertibility of the frame operator}.
   J. Fourier Anal. 10(2), (2004)  105-132.   
   


\bibitem{H-infty} N. Hauschka, P. Balazs and L. Köhldorfer, \textit{Details on the distribution co-orbit space $\mathcal{H}^\infty_w$}. arXiv:2502.07378, (2025). 

   \bibitem{Schur} K. Gr\"ochenig and M. Leinert, \textit{Symmetry of matrix algebras and symbolic calculus for infinite
matrices}. Trans. Amer. Math. Soc.  358, (2006)  2695-2711

  
\bibitem{Jaffard_1990}
  S. Jaffard,
  \textit{Propriétés des matrices "bien localisées" près de leur diagonale et quelquels applications}.
   Ann. Inst. Henri Poincar{\'e} 7, (1990)  461-476.

\bibitem{Koehldorfer_2024}
  L. Köhldorfer and Peter Balazs,
  \textit{Wiener pairs of Banach algebras of operator-valued matrices}. arXiv:2407.16416, (2024).  

 \bibitem{Henry_1}  F. Luef and H. McNulty, \textit{Quantum time-frequency analysis and pseudodifferential operators}.
arXiv:2403.00576, (2024).

  \bibitem{Skrettingland}  F. Luef and E. Skrettingland, \textit{Mixed-state localization operators: Cohen’s class and trace class
operators}. J. Fourier Anal.  Appl. 25, (2019)  2064–2108.




   
  \bibitem{Nielsen_2012}
  M. Nielsen and K. N. Rasmussen,
  \textit{Compactly supported frames for decomposition spaces}.
   J. Fourier Anal. Appl. 18(1), (2012)  87-117. 

\bibitem{Nielsen_2014}
  M. Nielsen,
  \textit{Frames for decomposition spaces generated by a single function}.
   Collect. Math. 65, (2014)  183-201. 
     
      \bibitem{Ryan_2002} R. A. Ryan, \textit{Introduction to Tensor Products of Banach Spaces}, Springer, London (2002).

 \bibitem{Salencon_1996} J. Salençon, \textit{Mécanique des Milieux Continus}, Ellipses, Paris (1996).      
      


   \bibitem{Sjöstrand} J. Sj\"ostrand, \textit{Wiener-type algebras of pseudodifferential operators}. Centre de Mathematiques,
Ecole Polytechnique. Palaiseau France, S\'eminaire 1994–1995, 
(1994).

 


\bibitem{Sun_2005}
  Q. Sun,
  \textit{Wiener’s lemma for infinite matrices with polynomial off-diagonal decay}.

C. R. Acad. Sci. Paris, Ser. I, 340, (2005)  567–570.

\bibitem{Voigtlaender_2022}
  F. Voigtlaender,
  \textit{Structured, compactly supported Banach frame decompositions of decomposition spaces}.
   Diss. Math. 575, (2022)  1-179. 

   
\end{thebibliography}
\end{document}